\title[Chern classes]{Chern classes and generators}
\author{Masaki  Kameko}
\address{
Department of Mathematical Sciences,
%College of Systems Engineering and Science, 
Shibaura Institute of Technology,
307 Minuma-ku Fukasaku, Saitama-City 337-8570, Japan}
 \email{kameko@shibaura-it.ac.jp}
\thanks{The author is partially supported by the Japan Society for the Promotion of Science, Grant-in-
Aid for Scientific Research (C) 22540102.}
\subjclass[2000]{Primary 55R40}
\newtheorem{theorem}{Theorem}[section]
\newtheorem{proposition}{Proposition}[section]
\theoremstyle{definition}
\newtheorem{remark}{Remark}[section]
\newcommand{\spin}{\mathrm{Spin}}
\newcommand{\sq}{\mathrm{Sq}}
\begin{document}

\begin{abstract}
We give a simple proof for the fact that algebra generators of the mod 2 cohomology of classifying spaces of exceptional Lie groups are given by Chern classes and Stiefel-Whitney classes of certain representations.
\end{abstract}

\maketitle

\section{Introduction}

One of the standard tools to compute the mod $2$ cohomology of the classifying space $BG$  of a connected Lie group $G$ 
 is a spectral sequence converging to the mod $2$ cohomology of $BG$. In the case of $G=E_6$ or $E_7$, as an algebra over the Steenrod algebra, the $E_2$-term of the spectral sequence is generated by only two elements, one is of degree $4$ and the other is of degree $32$ or $64$, respectively. See \cite{kono-mimura-1975}, \cite{kono-mimura-shimada-1976}. To show the collapsing of the spectral sequence, it suffices to show that the algebra generator of degree $32$ or $64$ survives to the $E_\infty$-term, respectively. It could be done by showing that the algebra generator is represented by a characteristic class of some representation of $G$. It is also conjectured that the mod $2$ cohomology of $BE_8$ is also generated by these two elements as an algebra over the mod $2$ Steenrod algebra.
 
Kono, in \cite{kono-2005}, gives a simple proof for the following theorem by considering certain finite $2$-groups in exceptional Lie groups and computing Stiefel-Whitney classes of their representations.
 
\begin{theorem}\label{1}
For $G=F_4$, $E_6$, $E_7$, $E_8$, 
there exist representations $\rho_4$, $\rho_6$, $\rho_7$, $\rho_8$ such that
$w_{16}(\rho_4)$, $c_{16}(\rho_6)$, $c_{32}(\rho_7)$ and $w_{128}(\rho_8)$ are indecomposable in $H^{*}(BG;\mathbb{Z}/2)$. 
\end{theorem}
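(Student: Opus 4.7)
The plan is to handle all four cases by the same strategy: restrict to a well-chosen elementary abelian $2$-subgroup $A\cong(\mathbb{Z}/2)^{n}$ of $G$ and exhibit the characteristic class in question as something that cannot lie in the square of the restricted positive ideal. Concretely, if $\iota\colon A\hookrightarrow G$ is the inclusion and $x$ is decomposable in $H^{*}(BG;\mathbb{Z}/2)$, then $\iota^{*}(x)\in(\iota^{*}H^{+}(BG;\mathbb{Z}/2))^{2}$, so it suffices to rule this out for $x=w_{16}(\rho_{4})$, $c_{16}(\rho_{6})$, $c_{32}(\rho_{7})$, and $w_{128}(\rho_{8})$.

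For each group $G$ I would pair a small faithful representation with a known elementary abelian $2$-subgroup on which that representation splits completely: for $F_{4}$, the $26$-dimensional irreducible representation together with the rank-$5$ non-toral subgroup; for $E_{6}$, the $27$-dimensional minuscule representation; for $E_{7}$, the $56$-dimensional minuscule representation; and for $E_{8}$, the $248$-dimensional adjoint representation together with the rank-$9$ non-toral subgroup of Borel--Griess. Writing $\rho|_{A}=\bigoplus_{j}\chi_{j}$ and identifying each character with a class $\alpha_{j}\in H^{1}(BA;\mathbb{Z}/2)$ (real case) or $\beta_{j}\in H^{2}(BA;\mathbb{Z}/2)$ (complex case), the splitting principle gives
\[
\iota^{*}w(\rho)=\prod_{j}(1+\alpha_{j}),\qquad \iota^{*}c(\rho)=\prod_{j}(1+\beta_{j}),
\]
so that the degree-$16$, $32$, $64$, and $128$ components in the theorem become explicit elementary symmetric polynomials in the $\alpha_{j}$ or $\beta_{j}$. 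The passage from complex characters to mod $2$ coefficients doubles the relevant degrees, which is precisely what matches the indices in the statement.

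The heart of the argument --- and the main obstacle --- is the last step: showing that these explicit symmetric polynomials are not contained in $(\iota^{*}H^{+}(BG;\mathbb{Z}/2))^{2}$. I would use the low-degree descriptions of $H^{*}(BG;\mathbb{Z}/2)$ from \cite{kono-mimura-1975}, \cite{kono-mimura-shimada-1976}, together with the restrictions of those low-degree generators to $H^{*}(BA;\mathbb{Z}/2)$, which are polynomials in the $\alpha_{j}$ whose degrees are bounded above by some $d<16$ (respectively $d<32,64,128$). The expected finish is to exhibit a distinguished monomial in the restricted characteristic class --- typically one concentrated in a single variable, such as $\alpha_{1}^{16}$ or $\beta_{1}^{16}$ --- that cannot arise as a product of two elements each of which is a polynomial in the restricted lower-degree generators, because the supporting monomials of those restrictions are of the wrong degree profile. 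This monomial-level comparison is the combinatorial core of the proof and is precisely the computation that Kono performed by hand in \cite{kono-2005}; a streamlined proof should aim to compress it by taking advantage of $\mathrm{GL}_{n}(\mathbb{F}_{2})$-invariance of the restricted generators.
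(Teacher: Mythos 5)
Your strategy is genuinely different from the paper's, and in fact it is essentially the strategy of Kono \cite{kono-2005} (and, for $E_8$, of Mimura--Nishimoto \cite{mimura-nishimoto-2007}) that this paper is explicitly written to replace: restrict to a non-toral elementary abelian $2$-subgroup $A$, split the representation into characters, and show the resulting symmetric polynomial does not lie in $(\iota^{*}\tilde{H}^{*}(BG))^{2}$. The paper instead restricts along a circle $f_1\colon T^1\to \spin(n)\to G$ sitting inside the relevant spinor subgroup. There the representation-theoretic input collapses to a one-variable computation (Proposition~\ref{2}: the restricted spin representation has total mod $2$ Chern class $1+u^{2^{m-1}}$ or $1+u^{2^m}$), and the indecomposability test is supplied not by knowledge of $H^{*}(BG)$ but by Quillen's theorem \cite{quillen-1971} on $H^{*}(B\spin(n))$: since $c(f_1^{*}\lambda_1)=1$, the whole image of $H^{*}(BSO(n))$ dies, so the image of $Bf_1^{*}$ is the polynomial ring on $u^{2^{h-1}}$ (Proposition~\ref{3}), in which the restricted class is literally the polynomial generator. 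This is what makes the paper's proof short; your route necessarily reintroduces the combinatorics the paper is avoiding.

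The genuine gap is the step you yourself flag as the ``heart of the argument'': you never actually show that the restricted class lies outside $(\iota^{*}\tilde{H}^{*}(BG;\mathbb{Z}/2))^{2}$, you only describe what such a verification would look like and note that it is ``precisely the computation that Kono performed by hand.'' As written this is a proof outline with its decisive step deferred to the literature you are trying to streamline. Moreover, the step is not routine: to bound $(\iota^{*}\tilde{H}^{*}(BG))^{2}$ you must control $\iota^{*}\tilde{H}^{*}(BG)$ in \emph{every} degree up to $15$, $31$, $63$, resp.\ $127$, not merely the restrictions of the low-degree generators listed in \cite{kono-mimura-1975} and \cite{kono-mimura-shimada-1976}; for $E_8$ the ring $H^{*}(BE_8;\mathbb{Z}/2)$ is not known through degree $128$ (its generation by two classes is still a conjecture, as the introduction notes), so ``low-degree descriptions'' are unavailable and one must instead bound the image inside the ring of invariants $H^{*}(BA;\mathbb{Z}/2)^{W}$ for the Weyl group $W=N_G(A)/C_G(A)$ of the rank-$9$ subgroup --- which is exactly the lengthy calculation of \cite{mimura-nishimoto-2007}. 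Until that invariant-theoretic exclusion is carried out in all four cases, the proposal does not constitute a proof of Theorem~\ref{1}.
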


The case $G=E_6$ is proved by Kono and Mimura in \cite{kono-mimura-1975}, the case $G=E_7$ is also proved by Kono, Mimura and Shimada in \cite{kono-mimura-shimada-1976}.
The case $G=E_8$ is proved by Mimura and Nishimoto in \cite{mimura-nishimoto-2007} with rather complicated calculation.

In this paper, we give another simpler proof for this theorem using Chern classes of representations of spinor groups. The method in this paper is used by Schuster, Yagita and the author in \cite{schuster-yagita-2001} and \cite{kameko-yagita-2010},  where  the degree $4$ element in the integral cohomology of the above classifying spaces are studied in conjunction with Chern subring of classifying spaces.

\section{Chern classes}

Let us recall complex representation rings of spinor groups.
For $n\geq 6$, we consider $m=\left[ \dfrac{n}{2}\right]$, so that $n=2m$ or $2m+1$.
Let $T^m$ be a fixed maximal torus of  $\spin (n)$ and denote by $f_m:T^m \to \spin(n)$ the inclusion. We also denote by $f_1:T^1\to \spin(n)$ the composition of the inclusion of the maximal torus and the inclusion of the first factor $T^1$ into $T^m$. The complex representation ring of $\spin(n)$ is given by
\[
R(\spin (2m))=\mathbb{Z}[\lambda_1, \dots, \lambda_{m-2}, \Delta^{+}, \Delta^{-}], \]
and
\[
R(\spin (2m+1))=\mathbb{Z}[\lambda_1, \dots, \lambda_{m-1}, \Delta].
\]
The image $f_m^*(\lambda_i)$ of the above generator $\lambda_i$ in 
\[
R(T^m)=\mathbb{Z}[z_1, z_1^{-1}, \dots, z_m, z_m^{-1}],
\]
is the $i$-th elementary symmetric function of $z_1^{2}+z_1^{-2}, \dots, z_m^{2}+z_m^{-2}$.
The image $f_m^{*}(\Delta^{+})$, $f_m^{*}(\Delta^{-})$, $f_m^{*}(\Delta)$ in $R(T^{m})$ is given by
 \begin{align*}
f_m^{*}(\Delta^{+})&=\sum_{\varepsilon_1\cdots \varepsilon_m=+1} z_1^{\varepsilon_1}\dots z_m^{\varepsilon_m}, \\
f_m^{*}(\Delta^{-})&=\sum_{\varepsilon_1\cdots \varepsilon_m=-1} z_1^{\varepsilon_1}\dots z_m^{\varepsilon_m}, 
\end{align*}
and \[
f_m^{*}(\Delta)=\sum_{\varepsilon_1\cdots \varepsilon_m=\pm 1} z_1^{\varepsilon_1}\dots z_m^{\varepsilon_m}.
\]
Since $z^2_i+z_i^{-2}$ maps to $2$, for $1\leq i\leq m$, it is clear that 
\[
f_1^*(\lambda_i)=\alpha_i+\beta_i (z_1^2+z_1^{-2}),
\]
where $\alpha_i=2^i \displaystyle \binom{m-1}{i}$, $\displaystyle \beta_i=2^{i-1}\binom{m-1}{i-1}$.
It is also clear that
\begin{align*}
f_1^{*}(\Delta^{+})&=f_1^{*}(\Delta^{-})\\
&=2^{m-2}(z_1+z_1^{-1}), 
\\ 
f_1^{*}(\Delta)&=2^{m-1}(z_1+z_1^{-1}).
\end{align*}

Therefore, the total Chern classes are
\begin{align*}
c(f_1^{*}(\lambda_i))&=\{ (1+2u)(1-2u)\}^{\beta_i}\\&=(1-4u^2)^{\beta_i}, \\
c(f_1^*(\Delta^{+}))&=c(f_1^*(\Delta^{-}))\\
&=\{(1+u)(1-u)\}^{2^{m-2}}\\&=(1-u^2)^{2^{m-2}},\\
c(f_1^*(\Delta))&=\{(1+u)(1-u)\}^{2^{m-1}}\\&=(1-u^2)^{2^{m-1}}
\end{align*}
in $H^{*}(BT^1;\mathbb{Z})$.

Thus we have the following result on  the mod $2$ reduction of the total Chern classes:
\begin{proposition}\label{2}
The mod $2$ reduction of the total Chern classes of $f_1^{*}(\lambda_i)$, $f_1^{*}(\Delta^{\pm})$, $f_1^{*}(\Delta)$ are given by
\begin{align*}
c(f_1^{*}(\lambda_i))&=1, \\ c(f_1^{*}(\Delta^{+}))&=c(f_1^{*}(\Delta^{-}))\\ &=1+u^{2^{m-1}}, \\ c(f_1^{*}(\Delta))&=1+u^{2^{m}}
\end{align*}
in $H^{*}(BT^1;\mathbb{Z}/2)$.
In particular, $c(f_1^{*} \mu)=1+u^k$ where $\deg u^{k}=2k=2\dim \mu$ for $\mu=\Delta^{+}, \Delta^{-}, \Delta$.
\end{proposition}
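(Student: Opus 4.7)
The proposition is essentially a direct corollary of the integral computations just completed in the excerpt, so the plan is to reduce those three integral total Chern class formulas modulo $2$ and then verify the dimension bookkeeping in the final sentence.

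First, for $c(f_1^{*}(\lambda_i)) = (1-4u^2)^{\beta_i}$: since $4 \equiv 0 \pmod 2$, the polynomial $(1-4u^2)^{\beta_i}$ reduces to $1$ mod $2$, regardless of the value of $\beta_i$. This handles the $\lambda_i$ case in one line.

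Next, for $\Delta^{+}$ and $\Delta^{-}$, we have $c(f_1^{*}(\Delta^{\pm})) = (1-u^2)^{2^{m-2}}$. Modulo $2$, $1 - u^2 \equiv 1 + u^2$, and the Frobenius identity $(1+x)^{2^k} \equiv 1 + x^{2^k} \pmod 2$ applied to $x = u^2$ with $k = m-2$ gives $(1+u^2)^{2^{m-2}} \equiv 1 + u^{2^{m-1}} \pmod 2$. The same argument with $k=m-1$ handles $\Delta$, giving $1 + u^{2^{m}}$.

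Finally, for the ``in particular'' clause, I would note that the half-spin representations $\Delta^{\pm}$ have complex dimension $2^{m-1}$ and the spin representation $\Delta$ has complex dimension $2^{m}$, so in each case, setting $k$ to be the exponent of $u$ appearing in the reduced total Chern class, one checks $2k = 2\dim\mu$ by inspection: $2 \cdot 2^{m-1} = 2^{m}$ for $\mu = \Delta^{\pm}$ and $2 \cdot 2^{m} = 2^{m+1}$ for $\mu = \Delta$. There is no real obstacle here; the only subtlety is remembering to invoke the Frobenius (rather than expanding binomially) so that the reduction is immediate and the exponent lands at a power of $2$ as claimed.
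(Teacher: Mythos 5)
Your proposal is correct and matches the paper's approach exactly: the paper derives the proposition immediately from the displayed integral formulas $c(f_1^{*}(\lambda_i))=(1-4u^2)^{\beta_i}$, $c(f_1^*(\Delta^{\pm}))=(1-u^2)^{2^{m-2}}$, $c(f_1^*(\Delta))=(1-u^2)^{2^{m-1}}$ by reducing mod $2$ and applying the Frobenius identity, just as you do. The dimension check in your last paragraph is also consistent with $f_1^*(\Delta^{\pm})=2^{m-2}(z_1+z_1^{-1})$ and $f_1^*(\Delta)=2^{m-1}(z_1+z_1^{-1})$, which give $\dim\Delta^{\pm}=2^{m-1}$ and $\dim\Delta=2^{m}$.
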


\section{Generators}

In this section, let $H^{*}(X)$ be the mod $2$ cohomology $H^{*}(X;\mathbb{Z}/2)$ of $X$ and $\tilde{H}^{*}(X)$ the reduced mod $2$ cohomology $\tilde{H}^{*}(X;\mathbb{Z}/2)$. 
Recall that if $\xi$ is a real representation and if $\xi_{\mathbb{C}}$ is its complexification, then, 
the mod $2$ Chern class $c_{i}(\xi_{\mathbb{C}})$ is the square of the Stiefel-Whitney class $w_{i}(\xi)$, that is, $c_{i}(\xi_\mathbb{C})=w_i(\xi)^2$.

We recall Quillen's computation of the mod $2$ cohomology of $B\spin(n)$. As in the previous section, let $m=\left[\dfrac{n}{2}\right]$.
Depending on the type of the representation  of the spinor group $\spin(n)$, we  define 
$h$ to be $m-1$ or $m$ as follows:
\[
\begin{array}{c|c|c|c|c}
n & m & \mbox{type} & h  & h \\ \hline
8k & 4k & {\mathbb{R}} & 4k-1 &m-1 \\
8k+1 & 4k & {\mathbb{R}} & 4k-1 & m-1 \\
8k+2 & 4k+1 & {\mathbb{C}}  & 4k+1 & m \\
8k+3 & 4k+1 & {\mathbb{H}} & 4k+1 & m \\
8k+4 & 4k+2 & {\mathbb{H}} & 4k+2 &m \\
8k+5 & 4k+2 & {\mathbb{H}} & 4k+2  &m\\
8k+6 & 4k+3 &{\mathbb{C}} & 4k+3 &m \\
8k+7 & 4k+3 & {\mathbb{R}} & 4k+2 & m-1\\
\end{array}
\]
Let $p:\spin (n)\to SO(n)$ be the projection.
Then, the mod $2$ cohomology of $B\spin(n)$ is given by
$$H^{*}(B\spin(n))=H^{*}(BSO(n))/J \otimes \mathbb{Z}/2[z],$$
where $J$ is the ideal generated by $w_2, \sq^1 w_2, \dots, \sq^{2^{h-2}}\cdots\sq^1 w_2$ and $z$ is an element of degree $2^{h}$.
 Let $f_1:T^1\to \spin(n)$ be the inclusion of the first factor of the maximal torus.
Since $T^1$ is a closed subgroup of $\spin(n)$, the induced homomorphism $Bf_1^{*}:H^{*}(B\spin(n)) \to H^{*}(BT^1)$ is integral. On the other hand, since $c(f_1^{*}(\lambda_1))=1$, 
the induced homomorphism $(Bp\circ Bf_1)^*:\tilde{H}^{*}(BSO(n))\to \tilde{H}^{*}(BT^1)$ is zero.
Hence, $Bf_1^{*}(z)$ must be non-zero, that is, $Bf_1^{*}(z)=u^{2^{h-1}}$ and it generates the image of $Bf_1^*$.

Thus, we have the following proposition:
\begin{proposition}\label{3}
The image of $Bf_1^*$ is a polynomial ring generated by $u^{2^{h-1}}$, where $H^{*}(BT^1)=\mathbb{Z}/2[u]$ and $\deg u=2$.
\end{proposition}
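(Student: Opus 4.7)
The plan is to exploit the structural description
\[
H^*(B\spin(n)) = H^*(BSO(n))/J \otimes \mathbb{Z}/2[z]
\]
and analyze $Bf_1^*$ on each tensor factor.

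First, I would show that $(Bp \circ Bf_1)^*$ annihilates $\tilde H^*(BSO(n))$. Since $\lambda_1$ is the complexification of the standard real representation $\lambda_1'$ of $SO(n)$ pulled back along $p$, the identity $c_i(\xi_{\mathbb{C}}) = w_i(\xi)^2$ combined with $c(f_1^*\lambda_1) = 1$ (from Proposition 2, applicable since $\beta_1 = 1$) gives $w_i(f_1^*\lambda_1')^2 = 0$ in $H^*(BT^1) = \mathbb{Z}/2[u]$ for all $i \geq 1$. Frobenius is injective on $\mathbb{Z}/2[u]$, so $w_i(f_1^*\lambda_1') = 0$. Because $H^*(BSO(n))$ is polynomial in the $w_i$'s, $(Bp \circ Bf_1)^*$ vanishes on $\tilde H^*(BSO(n))$, and therefore on $\tilde H^*(BSO(n))/J$.

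Consequently, writing any element of $H^*(B\spin(n))$ as $\sum a_k z^k$ with $a_k \in H^*(BSO(n))/J$, one finds $Bf_1^*\bigl(\sum a_k z^k\bigr) = \sum \bar a_k \cdot Bf_1^*(z)^k$, where $\bar a_k$ denotes the degree-zero part of $a_k$. Hence the image of $Bf_1^*$ equals the $\mathbb{Z}/2$-subalgebra of $\mathbb{Z}/2[u]$ generated by $Bf_1^*(z)$.

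To identify $Bf_1^*(z)$, note that $z$ sits in degree $2^h$ and $H^{2^h}(BT^1) = \mathbb{Z}/2 \cdot u^{2^{h-1}}$, so $Bf_1^*(z) \in \{0, u^{2^{h-1}}\}$. The zero case is excluded by Proposition 2: for a spin representation $\mu \in \{\Delta^+, \Delta^-, \Delta\}$, naturality of Chern classes gives $Bf_1^*(c_{\dim \mu}(\mu)) = u^{\dim \mu} \neq 0$, so the image of $Bf_1^*$ is nontrivial in positive degrees, forcing $Bf_1^*(z) \neq 0$. Therefore $Bf_1^*(z) = u^{2^{h-1}}$ and the image is the polynomial ring $\mathbb{Z}/2[u^{2^{h-1}}]$. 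The main subtlety is precisely this nonvanishing step: the degree constraint by itself leaves two possibilities, and Proposition 2 applied to a spin representation is exactly what excludes the zero case.
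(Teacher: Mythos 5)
Your argument is correct, and its skeleton matches the paper's: both start from Quillen's decomposition $H^{*}(B\spin(n))=H^{*}(BSO(n))/J\otimes\mathbb{Z}/2[z]$, kill the first tensor factor by combining $c(f_1^{*}\lambda_1)=1$ with $c_i(\xi_{\mathbb{C}})=w_i(\xi)^2$ and the injectivity of squaring in the domain $\mathbb{Z}/2[u]$, and then reduce the whole proposition to deciding whether $Bf_1^{*}(z)$ is $0$ or $u^{2^{h-1}}$. The divergence is exactly at the step you single out as the main subtlety. The paper excludes $Bf_1^{*}(z)=0$ by noting that, since $T^1$ is a closed subgroup of $\spin(n)$, the restriction $Bf_1^{*}$ is \emph{integral} in the sense of Venkov--Quillen: $H^{*}(BT^1)$ is a finite module over (equivalently, an integral extension of) the image of $H^{*}(B\spin(n))$, which is impossible if that image is concentrated in degree $0$. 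You instead invoke Proposition~\ref{2} and naturality: the top Chern class of the spin representation $\mu$ restricts to $c_{\dim\mu}(f_1^{*}\mu)=u^{\dim\mu}\neq 0$, so the image of $Bf_1^{*}$ meets positive degrees and $Bf_1^{*}(z)$ cannot vanish. Both arguments are sound. Yours has the advantage of being self-contained within the paper---it recycles the computation of Section~2 rather than importing an external finiteness theorem---and it makes the logical dependence of Proposition~\ref{3} on Proposition~\ref{2} explicit; the paper's appeal to integrality is shorter and does not depend on knowing any particular representation restricts nontrivially, but it does require the reader to supply Venkov's theorem.
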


Now, we consider the following representations for exceptional Lie groups $F_4$, $E_6$, $E_7$, $E_8$:
\[
\begin{array}{ccccccc} 
\mathrm{Spin}(9) & \stackrel{g_4}{\longrightarrow} & F_4 & \stackrel{\rho_4}{\longrightarrow} & SO(26) & {\longrightarrow} & SU(26), \vspace{1mm}\\ 
\mathrm{Spin}(10) & \stackrel{g_6}{\longrightarrow} & E_6 & \stackrel{\rho_6}{\longrightarrow} & SU(27),\vspace{1mm}\\ \mathrm{Spin}(12) & \stackrel{g_7}{\longrightarrow} & E_7 &\stackrel{\rho_7}{\longrightarrow} &  Sp(28) & \longrightarrow & SU(56),\vspace{1mm}\\ 
\mathrm{Spin}(16) & \stackrel{g_8}{\longrightarrow} & E_8 & \stackrel{\rho_8}{\longrightarrow} & SO(248) &
{\longrightarrow} & SU(248)
\end{array}
\]
such that
\begin{align*}
g_4^{*}(\rho_4)&=1+\lambda_1 + \Delta, \\
g_6^{*}(\rho_6)&=1+\lambda_1 + \Delta^+, \\
g_7^{*}(\rho_7)&=2\lambda_1 + \Delta^-, 
\\g_8^{*}(\rho_8)&=8+\lambda_2 + \Delta^+.
\end{align*}
The existence of such representations (and their construction) is proved in Adams' book \cite{adams-1996}.
These representations are tightly connected with the construction of  exceptional Lie groups.

With the following table, we summarize the information we need in the proof of Theorem~\ref{1}.
\[
\begin{array}{c|c|c|c|c|c}
G & \spin(n) & m & \mbox{rep.} & \mbox{$\dim$ of rep. }& \deg z = 2^h \\ \hline
F_4 & \spin(9) & 4 & \Delta & 16 &  16  \\
E_6 & \spin(10) & 5 & \Delta^{+} & 16  & 32 \\
E_7 & \spin(12) & 6 & \Delta^{-} & 32  & 64 \\
E_8 & \spin(16) & 8 & \Delta^{+} & 128  & 128
\end{array}
\]

\begin{proof}[Proof of Theorem~\ref{1} ]

For $G=E_6$, $E_7$, by Proposition~\ref{2}, we have 
$$c_{16}(f_1^{*}g_6^*\rho_6)=u^{16}, \quad c_{32}(f_1^{*}g_7^*\rho_7)=u^{32}.$$ 
By Proposition~\ref{3}, these elements are
indecomposable in $\mathrm{Im}\, Bf_1^*$. So, 
$c_{16}(\rho_6)$, $c_{32}(\rho_{7})$ are indecomposable in $H^{*}(BE_6)$, $H^*(BE_7)$.

For $G=F_4, E_8$, let $\rho_{4, \mathbb{C}}$, $\rho_{8,\mathbb{C}}$ be complexification of $\rho_4, \rho_8$, respectively. Then, by Proposition~\ref{2}, we have
$$c_{16}(f_1^{*}g_4^*\rho_{4,\mathbb{C}})=u^{16}, \quad c_{128}(f_1^{*}g_8^*\rho_{8,\mathbb{C}})=u^{128}.$$
These elements are decomposable in $\mathrm{Im}\,  Bf_1^*$.
However, since $\rho_{4,\mathbb{C}}$, $\rho_{8,\mathbb{C}}$ are complexification of $\rho_{4}$, $\rho_{8}$, we have $w_{16}(f_1^{*}g_4^*\rho_{4})=u^{8}$, $w_{128}(f_1^{*}g_8^{*}\rho_{8})=u^{64}$. By Proposition~\ref{3}, these elements are indecomposable in 
$\mathrm{Im}\,  Bf_1^*$. 
Hence, $w_{16}(\rho_{4})$, $w_{128}(\rho_{8})$ are indecomposable in $H^{*}(BF_4)$, $H^*(BE_8)$.
\end{proof}

\begin{remark}
If it is shown that the generators of the mod 2 cohomology of $BG$ is generated by two elements and one of these two generators is the degree $4$ element, say $y_4$, then
since $f_1^{*}(g_i^*(y_4))=0$, we have that $\mathrm{Im}\, f_1^*\circ g_i^*$ is generated by $f_1^*( g_4^*(w_{16}(\rho_4)))$, $f_1^*( g_6^*(c_{16}(\rho_6)))$, $f_1^*( g_7^*(c_{32}(\rho_7)))$, $f_1^*( g_8^*(w_{128}(\rho_8)))$, respectively. Thus, we do not need to refer the reader to Quillen's computation of the mod 2 cohomology of $B\spin(n)$ to complete the proof of Theorem~\ref{1}. This is the case for  $G=F_4, E_6, E_7$ and, if, for some $r$,  $E_r$-term of the spectral sequence converging to the mod $2$ cohomology of $BE_8$  
is also generated by two elements as an algebra over the Steenrod algebra, this argument is also applicable to the case $G=E_8$. 
\end{remark}

\end{document}